\newtheorem{theorem}{Theorem}[section] 
\newtheorem{lemma}[theorem]{Lemma} 
\newtheorem{proposition}[theorem]{Proposition}
\newtheorem{definition}[theorem]{Definition}
\newcommand{\newell}{\hat{\ell}}
\newcommand{\R}{{\mathbb R}}
\newcommand{\Z}{{\mathbb Z}}
\newcommand{\N}{{\mathbb N}}
\newcommand{\CC}{{\mathbb C}}
\newcommand{\C}{{\mathcal C}}
\newcommand{\F}{{\mathcal F}}
\newcommand{\Sym}{\mathrm{Sym}}
\newcommand{\diam}{\mathrm{diam}}
\newenvironment{mylist}{\begin{list}{}{
\setlength{\parskip}{0mm}
\setlength{\topsep}{2mm}
\setlength{\parsep}{0mm}
\setlength{\itemsep}{0.5mm}
\setlength{\labelwidth}{7mm}
\setlength{\labelsep}{3mm}
\setlength{\itemindent}{0mm}
\setlength{\leftmargin}{12mm}
\setlength{\listparindent}{6mm}
}}{\end{list}}
\title{Some closure results for $\C$-approximable groups}
\author{Derek F. Holt and Sarah Rees}
\date{}
\begin{document}
\maketitle
\begin{abstract}
We investigate closure results for $\C$-approximable groups,
for certain classes $\C$ of groups with invariant length functions. 
In particular we prove, each time for certain (but not necessarily the same)
classes $\C$ that: \linebreak
(i) the direct product of two $\C$-approximable groups is
$\C$-approximable; 
(ii) the restricted standard wreath product $G \wr H$ is $\C$-approximable
when $G$ is $\C$-approximable and $H$ is residually finite; and
(iii) a group $G$ with normal subgroup $N$ is $\C$-approximable
when $N$ is $\C$-approximable and $G/N$ is amenable.
Our direct product result is valid for LEF, weakly sofic and hyperlinear
groups, as well as for all groups that are approximable by finite
groups equipped with commutator-contractive invariant length functions
(considered in \cite{Thom}). Our wreath product result is valid for weakly
sofic groups, and we prove it separately for sofic groups.
We note that this last result has recently been generalised by Hayes and Sale,
who prove in \cite{HayesSale} that the restricted standard wreath product of
any two sofic groups is sofic. Our result on
extensions by amenable groups is valid for weakly sofic groups, and was proved
in \cite[Theorem 1 (3)]{ElekSzabo} for sofic groups $N$.
\end{abstract}

\noindent 2010 Mathematics Subject Classification: 20F65, 20E22.

\noindent Key words: $\C$-approximable group, sofic, hyperlinear, weakly sofic, linearly sofic

\bigskip

\section{Introduction}
\label{sec:intro}
Our interest in $\C$-approximable groups stems from the fact that,
by making an appropriate choice of the class $\C$,
the definition of a $\C$-approximable group equates to that of one of
a variety of classes of groups currently of interest, including
sofic groups, hyperlinear groups, weakly sofic groups,  linear sofic groups,
and LEF groups. Hence techniques that apply to one such class
can often be applied to another. In this article we develop some general
techniques to establish some closure properties for many of these classes,
specifically for direct products, for wreath products with residually
finite groups, and for extensions by amenable groups.
We shall refer to closure results in the literature, mostly for specific
classes of $\C$-approximable groups; in some cases our proofs have been inspired
by the proofs of those.
We are grateful to the anonymous referee of the paper for a careful reading
and several helpful comments and corrections.

Our definition of a $\C$-approximable group is taken from
\cite[Definition 1.6]{Thom} and specialises to the definitions of sofic
and hyperlinear groups in \cite{Capraro};
we shall discuss some of the alternative definitions later on in this section.
Our definition requires the concept of an {\em invariant length function} on a
group $K$; that is, a map
\linebreak
$\ell:K \rightarrow [0,1]$ such that, for all $x,y \in K$:
\begin{eqnarray*}
&\ell(x)=0 \iff x=1,\quad &\ell(x^{-1})= \ell(x),\\
&\ell(xy) \leq \ell(x) + \ell(y),& \ell(xyx^{-1})=\ell(y).
\end{eqnarray*}
Every group admits the trivial length function $\ell_0$ defined by
$\ell_0(x)=1$ if $x\neq 1$, $\ell_0(1)=0$, and may admit many others.
The Hamming norm, which computes the proportion of points moved by a
permutation of a finite set, gives an invariant length function for finite
symmetric groups.

In the following definition $\C$ is understood to be a set of pairs, each pair
consisting of a group $K$ together with an invariant length function $\ell_K$
on $K$; so the same group may occur in $\C$ with more than one length
function. For a group $K$, 
the statement $K \in \C$ means that $K$ is the group in at least
one such pair.

\begin{definition}\label{def:defs}
\begin{enumerate}
\item
For a group $G$, a map $\delta:G \rightarrow \R$ (for which we write
$\delta_g$ rather than $\delta(g)$) is a {\em weight function for $G$} if
$\delta_1=0$ and  $\delta_g>0$ for all $1 \ne g \in G$.
\item
Let $G$ be a group with weight function $\delta$, let 
$K$ be a group with invariant length function $\ell_K$,
let $\epsilon > 0$, and let $F$ be a finite subset of $G$.
Then the map $\phi:G \to K$ is a {\em $(F,\epsilon,\delta,\ell_K)$
quasi-homomorphism} if:
\begin{mylist}
\item $\phi(1)=1$;
\item $\forall g,h \in F$,
           $\ell_K(\phi(gh)\phi(h)^{-1}\phi(g)^{-1}) \leq \epsilon$; and
\item $\forall g \in F\setminus \{1\}$, $\ell_K(\phi(g)) \geq \delta_g.$
\end{mylist}
\item
Let $\C$ be a class of groups with associated invariant length functions.
Then a group $G$ is {\em $\C$-approximable} if it has a weight function
$\delta$, such that, for each $\epsilon > 0$ and for each finite subset $F$
of $G$, there exists an $(F,\epsilon,\delta,\ell_K)$-quasi-homomorphism
$\phi:G \to K$ for some $(K,\ell_K) \in \C$.
\end{enumerate}
\end{definition}

Since these conditions cannot possibly be satisfied if $\delta_g > 1$ for some
$g \in G$, we shall always assume that $\delta_g \le 1$.

In particular, sofic groups are precisely those groups that are
$\C$-approximable with respect to the class  $\C$ of finite symmetric groups
with length function defined by the Hamming norms, and with weight functions
of the form $\delta_g = c$
for all $1 \ne g \in G$, for some fixed constant $c>0$; see  
\cite[Theorem 5.2]{PestovKwiatkowska}.

The (normalised) Hilbert-Schmidt norm on the set of $n \times n$
complex matrices $A = (a_{ij})$ is defined by
\[||(a_{ij})||_{{\rm HS}_n}:= \sqrt{\frac{1}{n} \sum_{i,j}|a_{ij}|^2} =
   \sqrt{\frac{1}{n} {\rm Tr}(A^*A)}.\]
The hyperlinear groups are precisely those groups that are $\C$-approximable
with respect to the class  $\C$ of finite dimensional unitary groups with
length function defined by $\ell(g) = \frac{1}{2}||g-I_n||_{{\rm HS}_n}$, and
with the same weight functions as for sofic groups;
see \cite[Theorem
4.2]{PestovKwiatkowska}.  
Furthermore, weakly sofic groups, linear sofic groups and LEF groups can all
be defined as $\C$-approximable groups, where
the classes $\C$ are (respectively)
the class $\F$ of all finite groups equipped with all associated invariant
length functions, the groups ${\rm GL}_n(\CC)$
equipped with the norm $\ell(g) = \frac{1}{n}{\rm rk}(I_n-g)$
\cite{ArzhantsevaPaunescu}, and
the finite groups equipped with the trivial length function.
We refer the reader to \cite{ArzhantsevaGal,CiobanuHoltRees, ElekSzabo,
ElekSzabo2,Paunescu,Stolz}
for a number of closure results involving various of these classes of groups.

Following \cite{Thom} we say that an invariant length function
$\ell:K \rightarrow [0,1]$ is {\em commutator-contractive} if it satisfies the
condition
\[ \ell([x,y]) \leq 4 \ell(x)\ell(y),\quad\forall x,y \in K.\]
Note that the trivial length function is commutator-contractive.
Let $\F_C$ be the class of all finite groups, each equipped with all
commutator-contractive length functions.  The main result of \cite{Thom} is
that Higman's group \cite{Higman} is not $\F_C$-approximable.  This group is
widely seen as a candidate for a first example of a non-sofic group.

There are many variations in the literature of the definition of
a $\C$-approximable group, not all of which are believed to be equivalent
in general to our basic definition, although the paucity of known examples of
groups that are not $\C$-approximable makes it difficult to prove their
inequivalence. 

Some definitions, such as \cite[Definitions 1,2]{Glebsky} and
\cite[Section 2]{Stolz} allow invariant length functions to take values in
$[0,\infty)$ rather than in $[0,1]$. This does not affect the classes of
sofic, hyperlinear, linear sofic and LEF groups, since the length functions
used in these classes all have range $[0,1]$.  It is also easily seen that the
class of weakly sofic groups is not changed by this variant since, if a group
is weakly sofic using length functions with range $[0,\infty)$, and $\ell_K$
is such a length function on a finite group $K$, then simply by replacing
$\ell_K(g)$ by the new length function $\max(\ell_K(g),1)$, we can show that
$G$ is weakly sofic using length functions with range $[0,1]$.
So this variation in the range of permissible length functions
does not appear to us to be significant.

The more substantial variants involve the condition
\[ \forall g \in F,\, \ell_K(\phi(g)) \geq \delta_g\]
in the definition of $\C$-approximability. These are discussed in
\cite[Section 2]{Stolz}.
The group $G$ is said to have the {\em discrete $\C$-approximation property}
if the weight function for $G$ can be chosen to be constant on all
non-identity elements. It is said to have the {\em strong discrete
$\C$-approximation property} if the condition above is replaced by
\[ \forall g \in F,\, \ell_K(\phi(g)) \geq \diam(K) - \epsilon \]
where $\diam(K)$ is defined to be $\sup \{ \ell_K(x) : x \in K \}$,
and $\epsilon$ is as in Definition~\ref{def:defs}(3).
By choosing the weight function $\delta_g = \diam(G)/2$ for all $g \in G
\setminus \{1\}$, we see immediately that the strong discrete
$\C$-approximation property implies the discrete
$\C$-approximation property, which clearly implies that $G$ is
$\C$-approximable using our definition. But the converse implications are not
clear, and may not hold in general.

The definition given for sofic groups in \cite{ElekSzabo} enforces the strong
discrete approximation property. But it is shown in \cite[Exercise
II.1.8]{Capraro} that, for this class, any $\C$-approximable group
has the strong discrete $\C$-approximation property.

It is proved in \cite[Proposition 5.13]{ArzhantsevaPaunescu} that
linearly sofic groups have the discrete $\C$-approximation property, but
it appears to be unknown whether they have the
strong discrete $\C$-approximation property.

Hyperlinear groups do not have the strong $\C$-approximation property,
and we are grateful to the referee for pointing this out to us.
The diameter of the unitary group $\mathcal{U}(n)$ with length function
defined as above by $\ell(g) = \frac{1}{2}||g-I_n||_{{\rm HS}_n}$ is $1$.
By using the identity
$$||g-h||^2_{{\rm HS}_n} + ||g+h||^2_{{\rm HS}_n} = 4$$ 
for $g,h \in \mathcal{U}(n)$ and putting $h=I_n$, we see that,
if $1 - \ell(g)$ is small, then $g$ is close to $-I_n$ with respect to the
Hilbert-Schmidt metric. So if $1-\ell(g_1)$ and $1-\ell(g_2)$ are both small,
then $g_1g_2$ is close to $I_n$ and hence $\ell(g_1g_2)$ is close to $0$.
It follows that a hyperlinear group with the strong discrete
$\C$-approximation property must be finite with order at most $2$.

What is true for hyperlinear groups is that, for any finite $F
\subseteq⊆ G$ and ε$\epsilon > 0$, there exists an approximately
multiplicative map $\phi: G \to \mathcal{U}(n)$ for which
$|{\rm Tr}(\phi(g))/n| < \epsilon$ for all $g \in F \setminus \{1\}$.
This was first proved by Elek and Szabo in \cite{ElekSzabo3} using ideas
introduced by R\u{a}dulescu in \cite{Radulescu}.

It is not difficult to show that the classes of $\F$-approximable (i.e. weakly
sofic) and $\F_C$-approximable groups both have the strong discrete
$\C$-approximation property. For a finite subset $F$ of a group $G$
in one of these two classes, and $\epsilon > 0$, let $c =
\min \{ \delta_g : g \in F \}$, and let $\phi:G \to K$ be a 
$(F,c\epsilon,\delta,\ell_K)$-quasi-homomorphism. Then, by
replacing $\ell_K$ by the length function
$\ell'_K(x) := \min(\ell_K(x)/c,1)$, which is commutator-contractive if
$\ell_K$ is, we see that $\phi$ is a $(F,\epsilon,\delta,\ell'_K)$
quasi-homomorphism for which $\ell'_K(\phi(g)) = 1$ for all $g \in F$,
so $G$ has the strong discrete $\C$-approximation property.

We prove our closure results for direct products, wreath products,
and extensions by amenable groups
in Sections \ref{sec:dp}, \ref{sec:wp} and \ref{sec:wps}, and \ref{sec:extam},
respectively. To prove the last of these, on extensions of
$\C$-approximable groups $N$ by amenable groups, we need to assume that
the group $N$ has the discrete $\C$-approximation property.
For each of our closure results, it is straightforward to show
that, if the groups that are assumed to be $\C$-approximable have the discrete
or the strong discrete $\C$-approximation property, then so does the
group $G$ that is proved to be $\C$-approximable.

Concerning free products, we note that it is proved in
\cite[Theorem 1]{ElekSzabo}, \cite[Theorem 5.6]{Stolz} and
\cite{Popa, Voiculescu}, respectively,
that the classes of sofic, linear sofic and hyperlinear groups are closed under free
products; further it is proved in \cite{BrownDykemaJung} that free products of hyperlinear groups amalgamated over amenable subgroups are hyperlinear.
We thank the referee for bringing to our attention the results 
for hyperlinear groups.
We are unaware of any corresponding results
for weakly sofic groups, and our efforts to prove such a
result have so far been unsuccessful.

\section{The direct product result}
\label{sec:dp}
In order to state and prove our closure result for direct products
of $\C$-approximable groups, we need to construct 
an appropriate invariant length function for the direct product of two groups
in $\C$.  Suppose that $(J,\ell_J),(K,\ell_K) \in \C$.
Then, for $p \in \N \cup \{\infty\}$,
we define the functions $L^p_{\ell_J,\ell_K}: J \times K \rightarrow [0,1]$ by
\[ L^p_{\ell_J,\ell_K}(x,y) := \sqrt[p]{\frac{\ell_J(x)^p +
\ell_K(y)^p}{2}},\quad p \in \N \]
and $L^\infty_{\ell_J,\ell_K}(x,y) := \max(\ell_J(x),\ell_K(y))$.
We write just $L^p(x,y)$ when there is no ambiguity.

Note that $L^p(x,y) \le L^\infty(x,y) \leq 1$ for all $p \ge 1$.

It follows immediately from Minkowski's inequality (basically the triangle
inequality for the $L^p$ norm) that $L^p$ satisfies the rule
\[ L^p(x_1x_2,y_1y_2) \leq L^p(x_1,y_1) + L^p(x_2,y_2), \]
and hence is an invariant length function on $J\times K$.
As we shall see below, we can use $L^p$ (for some choice of $p$) to deduce the
closure of $\C$-approximable groups under direct products provided that
$(J\times K,L^p) \in \C$.

\begin{theorem}
\label{thm:directprod}
Let $\C$ be a class of groups with associated invariant length functions
and suppose that, for some fixed $p \in \N \cup \{\infty\}$, and for any groups
$J,K \in \C$,
\[(J,\ell_J),(K,\ell_K) \in \C \Rightarrow (J \times K, L^p) \in \C.\]
Then the direct product $G \times H$ of two $\C$-approximable groups $G$ and
$H$ is $\C$-approximable.
\end{theorem}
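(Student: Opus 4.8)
The plan is to take the given weight functions $\delta^G$ for $G$ and $\delta^H$ for $H$, and combine them into a weight function $\delta$ for $G\times H$. The natural choice is to set $\delta_{(g,h)} := \tfrac12\min(\delta^G_g,\delta^H_h)$ for $(g,h)\neq(1,1)$, with the convention that $\delta^G_1=\delta^H_1=0$ do not participate in the minimum when one coordinate is trivial; concretely, for $(g,h)\neq(1,1)$ we want $\delta_{(g,h)}>0$, so we take it to be something like $\tfrac12$ times whichever of $\delta^G_g,\delta^H_h$ is nonzero (taking the smaller if both are nonzero). The point of the factor $\tfrac12$ is to absorb the factor of $1/\sqrt[p]{2}$ (or the $\max$) appearing in the definition of $L^p$, so that the lower-bound condition survives the averaging.

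**Next I would** fix $\epsilon>0$ and a finite subset $F\subseteq G\times H$. Let $F_G$ and $F_H$ be the projections of $F$ onto $G$ and $H$ respectively; these are finite subsets. Since $G$ is $\C$-approximable, I obtain an $(F_G,\epsilon,\delta^G,\ell_J)$-quasi-homomorphism $\phi:G\to J$ for some $(J,\ell_J)\in\C$, and similarly an $(F_H,\epsilon,\delta^H,\ell_K)$-quasi-homomorphism $\psi:H\to K$ for some $(K,\ell_K)\in\C$. I then define $\Phi:G\times H\to J\times K$ by $\Phi(g,h):=(\phi(g),\psi(h))$, and equip $J\times K$ with the length function $L^p=L^p_{\ell_J,\ell_K}$. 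By the hypothesis of the theorem, $(J\times K,L^p)\in\C$, so $\Phi$ maps into an admissible target.

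**Then I would verify** that $\Phi$ is an $(F,\epsilon,\delta,L^p)$-quasi-homomorphism. Condition (a), $\Phi(1,1)=(1,1)$, is immediate from $\phi(1)=1$, $\psi(1)=1$. For the multiplicativity condition (b), given $(g_1,h_1),(g_2,h_2)\in F$, the element $\Phi((g_1,h_1)(g_2,h_2))\,\Phi(g_2,h_2)^{-1}\,\Phi(g_1,h_1)^{-1}$ has first coordinate $\phi(g_1g_2)\phi(g_2)^{-1}\phi(g_1)^{-1}$ and second coordinate $\psi(h_1h_2)\psi(h_2)^{-1}\psi(h_1)^{-1}$; since the projections of $F$ land in $F_G$ and $F_H$, each coordinate has length at most $\epsilon$, and because $L^p$ is the $p$-average (or max) of the two coordinate lengths, and the $p$-mean of two numbers each $\le\epsilon$ is again $\le\epsilon$ (as $\sqrt[p]{(\epsilon^p+\epsilon^p)/2}=\epsilon$, and likewise for the max), condition (b) follows. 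For the lower-bound condition (c), take $(g,h)\in F\setminus\{(1,1)\}$; then at least one coordinate is nontrivial, say $g\neq1$ (the case $h\neq1$ is symmetric), so $\ell_J(\phi(g))\ge\delta^G_g$, whence $L^p(\Phi(g,h))=\sqrt[p]{(\ell_J(\phi(g))^p+\ell_K(\psi(h))^p)/2}\ge \sqrt[p]{\ell_J(\phi(g))^p/2}=\ell_J(\phi(g))/\sqrt[p]{2}\ge \delta^G_g/2\ge\delta_{(g,h)}$, using our choice of $\delta$; the $p=\infty$ case follows from $L^\infty(\Phi(g,h))\ge\ell_J(\phi(g))\ge\delta^G_g\ge\delta_{(g,h)}$.

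**The main subtlety,** rather than a genuine obstacle, lies in the bookkeeping around the lower-bound condition (c): the weight function must be defined globally on $G\times H$ before $\epsilon$ and $F$ are chosen, yet it must simultaneously dominate the appropriate fraction of $\delta^G_g$ or $\delta^H_h$ for \emph{whichever} coordinate is nontrivial. The averaging in $L^p$ costs a factor of $\sqrt[p]{2}$, which is harmless for a single fixed $p$ but must be accounted for uniformly; this is exactly what the factor $\tfrac12\le 1/\sqrt[p]{2}$ in the definition of $\delta$ handles. Everything else is routine, resting on the observation made in the excerpt that $L^p$ is an invariant length function via Minkowski's inequality.
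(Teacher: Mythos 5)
Your proof is correct and follows essentially the same route as the paper: project $F$ into finite subsets of $G$ and $H$, take the product of the two quasi-homomorphisms into $J\times K$ equipped with $L^p$, and check the three conditions coordinatewise. The only difference is cosmetic: the paper defines the weight on $G\times H$ as the $L^p$-mean $\sqrt[p]{(\delta^G_g{}^p+\delta^H_h{}^p)/2}$ of the two weights (interpreting $\delta^G_1=\delta^H_1=0$), which makes condition (c) follow with no case analysis, whereas your $\tfrac12\min$ of the nonzero weights requires the small bookkeeping you describe but works equally well.
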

\begin{proof}
Suppose that $\C,p$ satisfy the conditions of the theorem.

Let $G$ and $H$ be $\C$-approximable with associated weight functions
$\delta^G$ and $\delta^H$.  We define the weight function
$\delta^{G \times H}$ by
\[ \delta^{G \times H}((g,h)) :=
         \sqrt[p]{\frac{\delta^G(g)^p + \delta^H(h)^p}{2}}. \]

Now suppose that $\epsilon>0$ is given, and let  $F$ be a finite subset
of $G \times H$.
Then we can find finite subsets $F_G \subseteq G$, $F_H \subseteq H$ such
that $F \subseteq F_G \times F_H$, pairs $(J,\ell_J),(K,\ell_K) \in \C$,
an $(F_G,\epsilon,\delta^G,\ell_J)$-quasi-homomorphism
$\phi_G: G \rightarrow J$, and an
$(F_H,\epsilon,\delta^H,\ell_K)$-quasi-homomorphism
$\phi_H:H \rightarrow K$.

We define $\phi: G \times H \rightarrow M:= J \times K$ by
$\phi((g,h)) :=(\phi_G(g),\phi_H(h))$ and
$\ell_M((x,y)) := L^p((x,y))$.

We verify easily that, for $(g_1,h_1),(g_2,h_2) \in F$, and hence
$g_1,g_2 \in F_G$, $g_2,h_2 \in F_H$,
\begin{eqnarray*}
&&\ell_M(\phi((g_1g_2,h_1h_2)\phi(g_2,h_2)^{-1}\phi(g_1,h_1)^{-1})\\
&&= L^p((\phi_G(g_1g_2)\phi_G(g_2)^{-1}\phi_G(g_1)^{-1},
\phi_H(h_1h_2)\phi_H(h_2)^{-1}\phi_H(h_1)^{-1})) \le \epsilon,
\end{eqnarray*}
and the other conditions are similarly verified.
\end{proof}

We can apply the result to deduce closure under direct products
for the classes of weakly sofic groups, LEF groups, hyperlinear groups, linear
sofic groups and Thom's class of $\F_C$-approximable groups \cite{Thom}.

For weakly sofic groups, the condition holds for any $p$, and for LEF groups
it holds for $p=\infty$. 

When $\ell_J, \ell_K$ are Hilbert-Schmidt norms in the same
dimension $n$, the function $L^2$ matches the Hilbert-Schmidt norm in
dimension $2n$; observing that whenever $G$ maps by a quasi-homomorphism to a
linear group in dimension $m$ it also maps to a linear group in dimension $rm$,
for any $r$, via a quasi-homomorphism with the same parameters (the composite
of the original quasi-homomorphism and a diagonal map), we see that in essence
the theorem applies with $p=2$ to prove closure under direct products
for the class of hyperlinear groups. Similarly it applies when $p=1$
to prove closure under direct products for the class of linear sofic groups.

But for Hamming norms $\ell_J,\ell_K$, the function $L^p_{\ell_J,\ell_K}$ is not a Hamming
norm, and hence we cannot deduce the closure of the class of sofic groups
under direct products from this result.

Of course all of these specific closure results are already known, and the
corresponding result for sofic groups is proved in \cite{ElekSzabo}.

The following lemma together with Theorem \ref{thm:directprod} shows that the
class of $\F_C$-approximable groups is closed under direct products.

\begin{lemma}
Suppose that groups $J,K$ have commutator-contractive length functions
$\ell_J:J \rightarrow [0,1]$, $\ell_K:K \rightarrow [0,1]$.
Then $L^\infty$, as defined above,
is a commutator-contractive length function for their direct product. 
\end{lemma}
\begin{proof}
Let $(g_1,h_1),(g_2,h_2) \in G \times H$. Then
\begin{eqnarray*}
&&L^\infty([(g_1,h_1),(g_2,h_2)]) = L^\infty(([g_1,g_2],[h_1,h_2])) \\
&&= \max(l_J([g_1,g_2]),l_K([h_1,h_2])) \le
\max(4l_J(g_1)l_J(g_2),4l_K(h_1)l_K(h_2))\\
&&\le 
4\max(l_J(g_1),l_K(h_1)) \max(l_J(g_2),l_K(h_2)) =
4L^\infty((g_1,h_1))L^\infty((g_2,h_2)).
\end{eqnarray*}
\end{proof}

This result does not hold in general for $L^p$ with $p \in [1,\infty)$.

\section{The wreath product result}
\label{sec:wp}
By definition the restricted standard wreath product $W= G \wr H$ of two
groups $G,H$ is a semi-direct product $H \ltimes B$. The {\em base group} $B$
of $W$ is the direct product of copies of $G$, one for each $h \in H$,
and is viewed as the set of all functions $b:H \rightarrow G$ with finite
support (that is, with $b(h)$ trivial for all but finitely many $h\in H$)
Elements of $B$ are multiplied component-wise; that is, $b_1b_2(h)=b_1(h)b_2(h)$
for $b_1,b_2 \in B$, $h \in H$.
For $b \in B$, we denote by $b^{-1}$ the function in $B$ defined by
$b^{-1}(h) = b(h)^{-1}$. 
The (right) action of $H$ on $B$ is defined by the rule
$b^h(h')=b(h'h^{-1})$; we often abbreviate $(b^h)^{-1}=(b^{-1})^h$ as $b^{-h}$.
So the elements of $W$ have the form $hb$ with $h \in H$, $b \in B$, and
$(h_1b_1)(h_2b_2) = h_1h_2b_1^{h_2}b_2$, while $(h,b)^{-1} = (h^{-1},b^{-h^{-1}})$. 

In order to state and prove our closure result for wreath products of
$\C$-approximable groups, we need to construct an appropriate invariant length
function for the wreath product $J \wr X$ of a group $J \in \C$ by a finite
group $X$.

Where $B'$ is the base group of $J \wr X$,
we define $\ell_J^X:J \wr X \rightarrow [0,1]$ as follows. For $b' \in B'$,
we put
\[ 
\ell_L^X(b') = \max_{x \in X} \ell_J(b'(x)), \]
and then, for $x \neq 1$, put
\[ \ell_J^X(xb')= 1.\]
It is straightforward to verify that $\ell_J^X$ is an invariant length function.

\begin{theorem}
\label{thm:wreathprod}
Let $\C$ be a class of groups with associated invariant length functions
and suppose that, for all $(J,\ell_J) \in \C$ and all finite groups $X$,
the wreath product $(J \wr X,\ell_J^X)$ is in $\C$.
Suppose that the group $G$ is $\C$-approximable and the group $H$ is
residually finite. Then the restricted standard wreath product $G \wr H$ is
$\C$-approximable.
\end{theorem}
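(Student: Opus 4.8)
The plan is to push a quasi-homomorphism $\phi_G \colon G \to J$ for $G$ through a suitable finite quotient of $H$ into the wreath product $J \wr X$, which lies in $\C$ by hypothesis. Write $W = G \wr H = H \ltimes B$ as in the preamble. I first fix a weight function on $W$: for a nontrivial base element $b \in B$ I set $\delta^W_b := \max_{k \in \mathrm{supp}(b)} \delta^G_{b(k)}$, and for $hb$ with $h \neq 1$ I set $\delta^W_{hb} := 1$. This is a genuine weight function, since $b \neq 1$ forces $b(k) \neq 1$ for some $k$, whence $\delta^W_b \geq \delta^G_{b(k)} > 0$, and all values lie in $(0,1]$.

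Now let $\epsilon > 0$ and a finite $F \subseteq W$ be given. Let $S_0 \subseteq H$ be the finite set comprising the $H$-parts of the elements of $F$ together with the supports of their base parts, and let $S \subseteq H$ be a finite set containing $S_0$ and closed under the finitely many products of elements of $S_0^{\pm 1}$ that occur in the computations below. Because $H$ is residually finite, I can choose a normal subgroup $N \trianglelefteq H$ of finite index for which the quotient map $\pi \colon H \to X := H/N$ is injective on $S$ and satisfies $\pi(h) \neq 1$ for every non-identity $h \in S_0$; only finitely many constraints are imposed, so a single $N$ meets them all. Let $F_G \subseteq G$ be the finite set of all values taken by the base parts of elements of $F$, and let $\phi_G \colon G \to J$ be an $(F_G, \epsilon, \delta^G, \ell_J)$-quasi-homomorphism for some $(J,\ell_J) \in \C$. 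I then define $\phi \colon W \to J \wr X$ by $\phi(hb) := \pi(h)\, b''$, where $b'' \colon X \to J$ is the base element with $b''(\pi(k)) := \phi_G(b(k))$ for $k \in \mathrm{supp}(b)$ and $b''(x) := 1$ for all other $x$; injectivity of $\pi$ on $S$ makes this unambiguous for the elements that arise. By hypothesis $(J \wr X, \ell_J^X) \in \C$, so it remains to verify the three quasi-homomorphism conditions.

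The normalisation $\phi(1) = 1$ is immediate, and the lower bound splits into two cases. If $hb \in F$ has $h \neq 1$, then $\pi(h) \neq 1$ by the choice of $N$, so the $X$-part of $\phi(hb)$ is nontrivial and $\ell_J^X(\phi(hb)) = 1 \geq \delta^W_{hb}$; if $h = 1$, then $\ell_J^X(\phi(b)) = \max_{k} \ell_J(\phi_G(b(k))) \geq \max_k \delta^G_{b(k)} = \delta^W_b$, using the lower bound for $\phi_G$ on $F_G$.

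The multiplicative condition is where the bookkeeping concentrates, and I expect it to be the main point to get right. For $g_1 = h_1 b_1$ and $g_2 = h_2 b_2$ in $F$ one has $g_1 g_2 = (h_1 h_2)(b_1^{h_2} b_2)$, and $\phi(g_1 g_2)$ and $\phi(g_1)\phi(g_2)$ share the $X$-part $\pi(h_1 h_2)$; hence $\phi(g_1 g_2)\phi(g_2)^{-1}\phi(g_1)^{-1}$ lies in the base group of $J \wr X$, and by the definition of $\ell_J^X$ together with the invariance of $\ell_J$ under the $X$-action its length equals $\max_k \ell_J\big(\phi_G(gg')\phi_G(g')^{-1}\phi_G(g)^{-1}\big)$ with $g = b_1(k h_2^{-1})$ and $g' = b_2(k)$. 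The injectivity of $\pi$ on $S$ is precisely what ensures that the $\pi(k)$-coordinate of $\phi(g_1 g_2)$ reads off $\phi_G(b_1(kh_2^{-1})\,b_2(k))$ rather than an uncontrolled product over a whole $\pi$-fibre, so each coordinate is a single instance of the defect of $\phi_G$ at a pair from $F_G$ and is therefore at most $\epsilon$. Since $\ell_J^X$ takes a maximum rather than a sum over coordinates, the bound $\leq \epsilon$ passes through with no loss, and the verification is complete.
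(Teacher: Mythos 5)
Your proof is correct and follows essentially the same strategy as the paper's: the same weight function on $W$, the same use of residual finiteness to choose $N$ so that $H/N$ separates the $H$-parts, the supports, and the translated supports $\mathrm{supp}(b_i)h_j$ (your set $S$ plays the role of the paper's set $E$ with its conditions (i)--(iii)), and the same map into $J\wr (H/N)$ equipped with $\ell_J^{H/N}$. The paper merely spells out the case analysis that you compress into ``each coordinate is a single instance of the defect of $\phi_G$'', checking that coordinates where one of the three factors degenerates to $\phi_G(1)=1$ yield exactly the identity and hence contribute length $0$.
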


\begin{proof}
Suppose that $G$ is $\C$-approximable with associated weight function
$\delta$, and that $H$ is residually finite, and let
$W = G \wr H$ be the restricted standard wreath product.
Let $B$ be the base group.

We define the weight function $\beta: W \rightarrow \R$  as follows:
\[ \beta_{hb}= \left \{ \begin{array}{ll} 1 & \hbox{if}\quad h\neq 1 \\
                                \max_{k \in H} \delta_{b(k)} & \mbox{otherwise.}
\end{array} \right .\]

Let $\epsilon>0$ be given, and let $F = \{ h_ib_i : 1 \le i \le r \}$ be a
finite subset of $W$.  Our aim is to find $(K,\ell_K) \in \C $ and an
$(F,\epsilon,\beta_W,\ell_K)$-quasi-homomorphism $\psi:W \rightarrow K$.

Let $E$ be a finite subset of $H$ that contains
\begin{mylist}
\item[(i)] $h_i$ for $1 \le i \le r$;
\item[(ii)] all $h \in H$ with $b_j(h) \ne 1$ for some $j$ with
$1 \le j \le r$; and
\item[(iii)] all $h \in H$ with $b_j(hh_i^{-1}) \ne 1$ for some
$i,j$ with $1 \le i \le r$, $1 \le j \le r$.
\end{mylist}

Choose $N \unlhd H$ with $H/N$ finite such that the images in $H/N$ of the
elements of $E$ are all distinct and the images of $E \setminus \{1\}$
are nontrivial.

Let $D = \{ b_j(h) : 1 \le j \le r, h \in H \}$.  Then $D$ is a finite subset
of $G$ so, by our definition of $\C$-approximability,
for a  given $\epsilon > 0$, there exists $(J,\ell_J)\in \C$,
and a $(D,\epsilon,\delta,\ell_J)$-quasi-homomorphism $\phi:G\rightarrow J$.

We will approximate $W$ by $K :=J \wr (H/N)$, and 
let $\ell_K$ be the length function $\ell_J^{H/N}$ defined above.
Let $B'$ be the base group of $K$,
that is, the group of finitely supported functions from $H/N$ to $J$.

We define $\psi: W \to K$ as follows. Suppose that $b \in B$, and $h,k \in H$.
Note that our choice of $N$ ensures that $E \cap kN$ is either empty or
consists of a single element $k' \in kN$.  We let
$\psi(hb) := \bar{h}\hat{b}$, where we write $\bar{h}$ for $hN$ and
$\hat{b}: H/N \rightarrow J$ is defined by the rule 
\[ \hat{b}(kN)= \left \{
\begin{array}{lll} 1 &\hbox{\rm when} & E \cap kN=\emptyset\\
\phi(b(k')) & \hbox{\rm when}  & E \cap kN=\{ k'\}. \\ \end{array} \right .
\]
We claim that $\psi$ has the appropriate properties.  Certainly $\psi(1)=1$.

We first verify the required lower bound on $\ell_K(\psi(hb))$ for elements
$hb \in F$. If $h \neq 1$ then our choice of $N$ ensures that $\bar{h} \neq 1$,
and so $\ell_K(\psi(hb))=1=\beta_{hb}$.

If $h=1$, then (where the maximum of an empty set of numbers in $[0,1]$
is defined to be $0$),
\begin{eqnarray*}
\ell_K(\psi(hb)) &=& \ell_K(\psi(b))=\ell_K(\hat{b})\\
&=&\max_{kN \in H/N: \{k'\}:= kN \cap E \neq \emptyset} \ell_J(\phi(b(k')))\\
&=& \max_{k' \in E} \ell_J(\phi(b(k')))\\
&\geq&  \max_{k' \in E} \delta_{b(k')}
= \max_{k' \in H} \delta_{b(k')}= \beta_b. \end{eqnarray*}

The equality of the two maxima in the final line follows from the definition
of $E$, which ensures that $b(k)=1$ for any $k \in H\setminus E$ and hence
that, for such $k$, $\delta_{b(k)}=0$.

It remains to show that, for $h_ib_i,h_jb_j \in F$,
\[ l_K(\psi(h_ib_ih_jb_j)(\psi(h_ib_i)\psi(h_jb_j))^{-1}) \le \epsilon.\]
We have
\begin{eqnarray*}
\psi(h_ib_ih_jb_j)&=&\psi(h_ih_jb_i^{h_j}b_j)=
   \overline{h_ih_j}\widehat{b_i^{h_j}b_j}, \quad \mbox{and}\\
\psi(h_ib_i)\psi(h_jb_j) &=& (\bar{h}_i\hat{b}_i)(\bar{h}_j\hat{b}_j)=
   \bar{h}_i\bar{h}_j\hat{b}_i^{\bar{h}_j}\hat{b}_j.
\end{eqnarray*}
Since $l_K$ is invariant under conjugation, the length we need is that of the
element
\[ b' :=\widehat{b_i^{h_j}b_j}\hat{b}_j^{-1}(\hat{b}_i^{\bar{h}_j})^{-1} \] 
of $B'$.
By definition, $\ell_K(b') = \max_{kN \in H/N} \ell_J(b'(kN))$.
So choose a coset $kN$. We want to bound $\ell_J(b'(kN))$ for each such choice.
We have
\begin{eqnarray*}
b'(kN) &=& \widehat{b_i^{h_j}b_j}(kN)
        (\hat{b}_j(kN))^{-1}(\hat{b}_i^{\bar{h}_j}(kN))^{-1} \\ 
&=& \widehat{b_i^{h_j}b_j}(kN)
     (\hat{b}_j(kN))^{-1}(\hat{b}_i(kh_j^{-1}N))^{-1} \\ 
&=& \left \{ \begin{array}{ll}
     (\hat{b}_i(kh_j^{-1}N))^{-1}
                   \,&\hbox{\rm if (1):}\quad kN \cap E = \emptyset,\\
 \phi(b_i(k'h_j^{-1})b_j(k'))(\phi(b_j(k')))^{-1}\\
\quad\quad \times (\hat{b}_i(kh_j^{-1}N))^{-1} 
                   &\hbox{\rm if (2):}\quad kN \cap E = \{ k'\}, \\
\end{array} \right .
\end{eqnarray*}
since in Case 1 we have
$\widehat{b_i^{h_j}b_j}(kN)= \hat{b}_j(kN)=1$,
and in Case 2, we have
$\widehat{b_i^{h_j}b_j}(kN)=
   \phi((b_i^{h_j}b_j)(k'))=\phi(b_i(k'h_j^{-1})b_j(k'))$,  
and $\hat{b}_j(kN) = \phi(b_j(k'))$. 

When $E \cap kh_j^{-1}N = \emptyset$, we have $\hat{b}_i(kh_j^{-1}N)=1$.
In that case, by the definition of $E$, we also have $b_i(k'h_j^{-1})=1$
and so, in both Case 1 and Case 2, we deduce that
$b'(kN)=1$ and $\ell_J(b'(kN))=0$. 

Otherwise $E \cap kh_j^{-1}N$ is non-empty, and its single element
is equal to $k''h_j^{-1}$, for some $k'' \in kN$.

Suppose first that $b_i(k''h_j^{-1})=1$, and hence again
we have $\hat{b}_i(kh_j^{-1}N)=1$.
If we are in Case 2 then we must also have $b_i(k'h_j^{-1})=1$,
since if $b_i(k'h_j^{-1})\neq 1$,
then  Condition (ii) of the
definition of $E$ gives $k'h_j^{-1} \in E$, and so $k'=k''$, 
contradicting $b_i(k''h_j^{-1})=1$. Then, just as above, we see that
in both Cases 1 and 2 we again get $b'(kN)=1$ and $\ell_J(b'(kN))=0$.

Otherwise $b_i(k''h_j^{-1})\ne 1$ and Condition (iii) of the definition of
$E$ gives $k'' \in E$ and hence we are in Case 2 with $k'=k''$. Then
\[ b'(kN) = 
\phi(b_i(k'h_j^{-1})b_j(k'))  
\phi(b_j(k')^{-1}\phi(b_i(k'h_j^{-1}))^{-1}.\]
Since $\phi$ was assumed to be a
$(D,\epsilon,\delta,\ell_J)$-quasi-homomorphism,
we have \linebreak $\ell_J(b'(kN)) \le \epsilon$ and, since this is true for all
$kN \in H/N$, we get $\ell_K(b') \le \epsilon$ as required.
\end{proof}

The conditions of the theorem clearly hold for the class $\F$, as well as for
finite groups equipped with the trivial length function, and hence the classes
of weakly sofic and LEF groups are both closed under restricted wreath products
with residually finite groups.
The following lemma together with Theorem \ref{thm:directprod}
shows that the class of $\F_C$-approximable groups is also closed under 
restricted wreath products with residually finite groups.
\begin{lemma}
\label{lem:wpcc}
Let $J$ be a group equipped with an invariant function $\ell_J$.
If $\ell_J$ is commutator-contractive, then so is $\ell_J^X$,
for any finite group $X$.
\end{lemma}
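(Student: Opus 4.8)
The plan is to verify the defining inequality $\ell_J^X([u,v]) \le 4\,\ell_J^X(u)\,\ell_J^X(v)$ directly, splitting into cases according to the $X$-components of $u$ and $v$. Write $u = xb$ and $v = yc$ with $x,y \in X$ and $b,c \in B'$. Since the projection $J \wr X \to X$ is a homomorphism, the $X$-component of $[u,v]$ equals $[x,y]$, and this is what decides which case we are in, because $\ell_J^X$ takes the value $1$ on every element with nontrivial $X$-component and is at most $1$ on all of $B'$. The case where \emph{both} $x \ne 1$ and $y \ne 1$ is then immediate: here $\ell_J^X(u) = \ell_J^X(v) = 1$, so the right-hand side is $4$ while the left-hand side is at most $1$, and neither invariance nor commutator-contractivity is needed.

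Next I would treat the case $x = y = 1$, so that $b,c \in B'$ and $[u,v] = [b,c] \in B'$. Since $B'$ is the direct product of copies of $J$ indexed by $X$, the commutator is computed componentwise, $[b,c](z) = [b(z),c(z)]$ for each $z \in X$. Applying commutator-contractivity of $\ell_J$ in each coordinate and then bounding a maximum of products by the product of the maxima gives
\[ \ell_J^X([b,c]) = \max_{z \in X} \ell_J([b(z),c(z)]) \le \max_{z \in X} 4\,\ell_J(b(z))\,\ell_J(c(z)) \le 4\,\ell_J^X(b)\,\ell_J^X(c), \]
which is exactly what is required. This is the only case in which the hypothesis that $\ell_J$ is commutator-contractive is actually used.

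Finally, the mixed case, where exactly one of $x,y$ is nontrivial. Because $\ell_J^X([u,v]) = \ell_J^X([v,u])$ by inversion-invariance and the target bound is symmetric in $u$ and $v$, I may assume $x \ne 1$ and $y = 1$, so $v = c \in B'$. Then $[x,1] = 1$, whence $[u,v]$ again lies in $B'$; a short computation with the twisted multiplication rule shows that its base part is $b^{-1}c^{-x}bc$, whose value at $z \in X$ is $b(z)^{-1}c(zx^{-1})^{-1}b(z)c(z)$. Using the triangle inequality together with conjugation- and inversion-invariance of $\ell_J$,
\[ \ell_J\bigl(b(z)^{-1}c(zx^{-1})^{-1}b(z)c(z)\bigr) \le \ell_J(c(zx^{-1})) + \ell_J(c(z)) \le 2\,\ell_J^X(c), \]
and taking the maximum over $z \in X$ yields $\ell_J^X([u,v]) \le 2\,\ell_J^X(v) \le 4\,\ell_J^X(u)\,\ell_J^X(v)$, since $\ell_J^X(u) = 1$.

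I expect the only genuinely delicate step to be the explicit identification of the base part of $[u,v]$ in this mixed case, where one must track the action $c^x(z) = c(zx^{-1})$ carefully through the wreath-product multiplication $(h_1 b_1)(h_2 b_2) = h_1 h_2 b_1^{h_2} b_2$; everything else reduces to routine applications of the length-function axioms. It is worth noting in passing that the factor $4$ is needed only in the case where both $X$-components are trivial, the two remaining cases holding with room to spare.
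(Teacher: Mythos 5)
Your proof is correct and follows essentially the same route as the paper's: the same three-way case split on the $X$-components, componentwise commutator-contractivity when both are trivial, and the triangle inequality plus conjugation-invariance (giving the bound $2\ell_J^X$ of the base-group factor) in the mixed case. The only cosmetic difference is that you reduce the mixed case by symmetry to $x\ne 1$, $y=1$ rather than treating $x_1=1$, $x_2\ne 1$ directly, and your identification of the base part $b^{-1}c^{-x}bc$ is computed correctly.
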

\begin{proof}
We consider the commutator of two elements $x_1b_1$ and $x_2b_2$ in $J$.

First suppose that $x_1$ and $x_2$ are both non-trivial.  In this case
$\ell_J^X(x_1b_1)=\ell_J^X(x_2b_2)=1$ and so the inequality holds trivially.

Now suppose that $x_1=x_2=1$.
Then 
\begin{eqnarray*}
\ell_J^X([b_1,b_2]) &=& \max_{x \in X}\ell_J([b_1,b_2](x))
= \max_{x \in X} \ell_J([b_1(x),b_2(x)]) \\
&\leq& 4\max_{x \in X} \ell_J(b_1(x))\ell_J(b_2(x))\\
&\leq& 4 \max_{x \in X} \ell_J(b_1(x)) \max_{y \in X} \ell_J(b_2(y))
= 4 \ell_J^X(b_1) \ell^X_J(b_2)
\end{eqnarray*}

Finally suppose that $x_1=1$, $x_2 \neq 1$ (the other case is very
similar).  Then
\begin{eqnarray*}
\ell_J^X([b_1,x_2b_2]) &=& \ell_J^X(b_1^{-1}b_2^{-1}x_2^{-1}b_1x_2b_2)
=\ell_J^X(b_1^{-1}b_2^{-1}b_1^{x_2}b_2)\\
&=& \max_{x \in X} \ell_J(b_1(x)^{-1}b_2(x)^{-1}b_1^{x_2}(x)b_2(x))\\
&=& \max_{x \in X} \ell_J(b_1(x)^{-1}b_2(x)^{-1}b_1(xx_2^{-1})b_2(x))\\
&\le&  \max_{x \in X} (\ell_J(b_1(x)^{-1})+\ell_J(b_2(x)^{-1}b_1(xx_2^{-1})b_2(x)))\\
&=&  \max_{x \in X} (\ell_J(b_1(x)^{-1})+\ell_J(b_1(xx_2^{-1})))\\
&\leq&  \max_{x \in X} (\ell_J(b_1(x)^{-1}))+\max_{y \in X}(\ell_J(b_1(y)))\\
&\leq&  2\max_{x \in X} (\ell_J(b_1(x)^{-1}) = 2 \ell_J^X(b_1)\\
\end{eqnarray*}
\end{proof}

\section{The wreath product result for sofic groups}
\label{sec:wps}
We prove now the corresponding result for sofic groups. For this, we are not
free to choose our own norm function on the wreath product, but we must use the
Hamming distance norm. The proof is nevertheless very similar in structure
to that of Theorem \ref{thm:wreathprod}. 
We use the definition of sofic groups given in \cite{ElekSzabo} where,
rather than having a weight function on the group $G$, we require that,
for finite $F \subseteq G$, the proportion of moved points of elements of
$F \setminus \{ 1 \}$ in a $(F,\epsilon)$-quasi-action of $G$ on a finite set
is at least $1 - \epsilon.$

We note that this result has recently been generalised by Hayes and Sale,
who prove in \cite{HayesSale} that the restricted standard wreath product of
any two sofic groups is sofic.

\begin{theorem}
\label{thm:wreathprodsofic}
The restricted standard wreath product $G \wr H$ of a sofic group $G$ and
a residually finite group $H$ is sofic.
\end{theorem}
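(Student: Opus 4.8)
The plan is to follow the combinatorial skeleton of the proof of Theorem~\ref{thm:wreathprod} almost verbatim, changing only the permutation realisation of the approximating wreath product so that the Hamming norm can replace the auxiliary length function $\ell_J^X$. Given $F=\{h_ib_i: 1\le i\le r\}\subseteq W=G\wr H$ and $\epsilon>0$, I would first form exactly the same finite set $E\subseteq H$ satisfying conditions (i)--(iii), use residual finiteness of $H$ to choose $N\unlhd H$ with $Q:=H/N$ finite and separating the elements of $E$, and apply soficity of $G$ to the finite set $D=\{b_j(h)\}$ to obtain a finite set $\Omega$ and a map $\phi:G\to\Sym(\Omega)$ that is $\epsilon'$-multiplicative on $D$ and moves at least a proportion $1-\epsilon'$ of $\Omega$ on each element of $D\setminus\{1\}$, for a small parameter $\epsilon'$ fixed below. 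The approximating map $\psi:W\to\Sym(\Omega)\wr Q$ is then defined by the same formula $\psi(hb)=\bar h\,\hat b$, with $\hat b(kN)=\phi(b(k'))$ when $E\cap kN=\{k'\}$ and $\hat b(kN)=1$ otherwise.

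The one genuinely new ingredient is how $\Sym(\Omega)\wr Q$ is viewed inside a single symmetric group. The naive choice, the imprimitive action on $\Omega\times Q$ whose Hamming norm is the \emph{average} $\frac1{|Q|}\sum_q\ell_J(\hat b(q))$ of the fibre norms, is exactly the Hamming counterpart of $\ell_J^X$, and it fails: a base element $b\in F$ whose support in $H$ is a single point contributes only one nontrivial fibre, so its image moves a proportion $\approx 1/|Q|$ of $\Omega\times Q$, nowhere near $1-\epsilon$. I would instead use the \emph{product} action of $\Sym(\Omega)\wr Q$ on $\Omega^Q=\prod_{q\in Q}\Omega$, in which the base group acts coordinatewise and $Q$ permutes coordinates. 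For a base element the proportion of \emph{fixed} points is then $\prod_{q\in Q}\bigl(1-\ell_J(\hat b(q))\bigr)$, so the proportion moved lies between $\max_q\ell_J(\hat b(q))$ and $\sum_q\ell_J(\hat b(q))$: it behaves like a maximum from below and like a sum from above, which are precisely the two features needed.

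With this realisation the three conditions fall out. For $hb\in F$ with $h=1$, $b\ne 1$ there is a support point $k'\in E$ with $b(k')\ne 1$, so the single factor $1-\ell_J(\phi(b(k')))\le\epsilon'$ forces the fixed proportion of $\hat b$ below $\epsilon'$, giving moved proportion $\ge 1-\epsilon'$. For $h\ne 1$ the coordinate permutation induced by $\bar h\ne 1$ is fixed-point-free on $Q$, hence has at most $|Q|/2$ cycles, so $\psi(hb)$ fixes at most $|\Omega|^{|Q|/2}$ configurations; shrinking $N$ to make $|Q|$ large (permissible, since this only improves the separation of $E$) makes this proportion at most $\epsilon'$. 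For multiplicativity, the computation of Theorem~\ref{thm:wreathprod} shows the error is the single base element $b'=\widehat{b_i^{h_j}b_j}\,\hat b_j^{-1}(\hat b_i^{\bar h_j})^{-1}$, whose components are trivial except on at most $|E|$ cosets, on each of which $\ell_J(b'(kN))\le\epsilon'$; the union bound over coordinates gives moved proportion $\le\sum_q\ell_J(b'(q))\le|E|\,\epsilon'$.

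It therefore suffices to fix $\epsilon'=\epsilon/|E|$ at the outset. The main obstacle is exactly the tension the product action resolves: the Hamming norm, being an average, cannot simultaneously be large on sparsely supported base elements (needed for the lower bound) and small on the boundedly supported error term (needed for multiplicativity), whereas in the product realisation a single large fibre dominates the fixed-point product while boundedly many small fibres perturb it only additively. Once the product action is in place, every remaining estimate is a transcription of the corresponding step in Theorem~\ref{thm:wreathprod}.
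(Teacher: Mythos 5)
Your proposal is correct and follows essentially the same route as the paper: the paper also reuses the sets $E$, $N$, $D$ from Theorem~\ref{thm:wreathprod} and realises the approximating wreath product via the product (primitive) action on $Y = X^{H/N}$, with the same fixed-point estimates for the two types of elements of $F$ and the same case analysis for approximate multiplicativity (the paper takes the quasi-action parameter to be $\epsilon/m$ with $m=|H/N|$ and applies the union bound over all $m$ coordinates, rather than your $\epsilon/|E|$ over the supported ones). Two harmless slips to fix: the error element $b'$ can be supported on up to $2|E|$ cosets (those meeting $E$ together with those $kN$ for which $kh_j^{-1}N$ meets $E$), and when $H$ is finite you cannot make $|Q|$ large, so for the $h\neq 1$ estimate you should instead enlarge $\Omega$ (e.g.\ by a diagonal embedding), as the paper's freedom to enlarge either $m$ or $|X|$ allows.
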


\begin{proof}
Assume that $G$ is sofic and $H$ is residually finite, and let
$W = G \wr H$ be the restricted standard wreath product. So,
as in the proof of Theorem \ref{thm:wreathprod}, $W$ is the semidirect product
of its base group $B$ by $H$.

Let $F = \{ h_ib_i : 1 \le i \le r \}$ be a finite subset of $W$.
Then, for a given $\epsilon>0$, we need to find a
$(F,\epsilon)$-quasi-action of $W$ on some finite set $Y$.

We define the finite subset $E$ of $H$, the normal subgroup $N$ of $H$,
and the finite subset $D$ of $G$ exactly as in the proof of
Theorem \ref{thm:wreathprod}. So, in particular, for any $k \in H$,
$E \cap kN$ is either empty or consists of a single element $k' \in kN$.
Let $m = |H/N|$.

Then, by \cite[Lemma 2.1]{ElekSzabo},
for a  given $\epsilon > 0$, there is a $(D,\epsilon/m)$-quasi-action
$\phi:G \rightarrow \Sym(X)$ of $G$ on some finite set $X$,
and we may assume that $\phi(1)=1$. Since we can choose both $m$ and
$X$ to be arbitrarily large for given $D$ and $\epsilon$,
we may assume that $|X|^{-m/2} < \epsilon$.

Let $Y = X^{H/N}$ be the set of functions $\delta:H/N \to X$.
So $|Y| = |X|^m$.  We define $\psi: W \to \Sym(Y)$ as follows.
(The image of $\psi$ is contained in the primitive wreath product of
$\Sym(X)$ and $H/N$, as defined in \cite[Section 2.6]{DixonMortimer}.)

For $b \in B$, $h,k \in H$, 
$\delta^{\psi(hb)}(kN) := \delta(kh^{-1}N)^{\tau(b,k)}$,  where
\[
  \tau(b,k) := \left \{
\begin{array}{lll} 1 &\hbox{\rm when} & E \cap kN=\emptyset\\
\phi(b(k')) & \hbox{\rm when}  & E \cap kN=\{ k'\}. \\ \end{array} \right .
 \]

We claim that $\psi$ is a $(F,\epsilon)$-quasi-action of $W$ on $Y$.
Observe first that $\psi(1)=1$.

We check next that, for each $h_ib_i \in F \setminus \{ 1 \}$,
$\psi(h_ib_i)$ is $(1-\epsilon)$-different from $1$.
If $h_i \ne 1$ then, by assumption, $h_i \not\in N$, so $kh_i^{-1}N \ne
kN$ for all $kN \in H/N$. So, if $\delta \in Y$ is a fixed point of
$\psi(h_ib_i)$, then  the value of $\delta(kN)$ is uniquely determined by that
of $\delta(kh_i^{-1}N)$ for each $kN \in H/N$, so the proportion
of fixed points is at most $|X|^{m/2}/|X|^m = |X|^{-m/2}$, which we
assumed to be less than $\epsilon$.

If, on the other hand, $h_i=1$ and $b_i \ne 1$, then there exists
$h \in E$ with $b_i(h) \ne 1$. Now an element $\delta \in Y$ is fixed by
$\psi(h_ib_i) = \psi(b_i)$ if and only if $\delta(kN)$ is fixed by $\tau(b,k)$
for all $kN \in H/N$. Hence, in particular, for a fixed point $\delta$,
we have $\delta(hN) = \delta(hN)^{\tau(b_i,h)}$, and so $\delta(hN)$ is
a fixed point of $\tau(b_i,h) = \phi(b_i(h))$.
Since the proportion of such points in $X$ is, by assumption, at
most $\epsilon$, the same is true for $\psi(b_i)$. 

Finally we need to verify that $\psi(h_ib_i)\psi(h_jb_j)$ is
$\epsilon$-similar to $\psi(h_ih_jb_i^{h_j}b_j)$ for each $i,j$
with $1 \le i,j \le r$; that is, that the two permutations
agree on at least a proportion $1-\epsilon$ of the points.

Now \[ \delta^{\psi(h_ib_i)\psi(h_jb_j)}(kN) =
(\delta^{\psi(h_ib_i)}(kh_j^{-1}N))^{\tau(b_j,k)} =
\delta(kh_j^{-1}h_i^{-1}N)^{\tau(b_i,kh_j^{-1})\tau(b_j,k)},\]
and
\[ \delta^{\psi(h_ih_jb_i^{h_j}b_j)}(kN) =
  \delta(kh_j^{-1}h_i^{-1}N)^{\tau(b_i^{h_j}b_j,k)},\]
so we need to compare $\tau(b_i,kh_j^{-1})\tau(b_j,k)$ with
$\tau(b_i^{h_j}b_j,k)$.

The argument is very similar to that in the analogous part of the proof of Theorem~\ref{thm:wreathprod}
We are in one of two cases. Either
\begin{mylist}
\item[(1)] $E \cap kN = \emptyset$, 
in which case $\tau(b_j,k) = \tau(b_i^{h_j}b_j,k)=1$, or
\item[(2)] $E \cap kN = \{k'\}$,
for some $k' \in K$, 
and so
$\tau(b_j,k)=\phi(b_j(k'))$, and $\tau(b_i^{h_j}b_j,k) =
\phi((b_i^{h_j}b_j)(k')) = \phi(b_i(k'h_j^{-1})b_j(k'))$.
\end{mylist}

When $E \cap kh_j^{-1}N = \emptyset$, then $b_i(k'h_j^{-1})=1$ and, in both
Case 1 and Case 2, 
$\tau(b_i,kh_j^{-1})\tau(b_j,k) = \tau(b_i^{h_j}b_j,k)$.

Otherwise, $E \cap kh_j^{-1}N = \{k''h_j^{-1}\}$ for some $k'' \in kN$.

Suppose first that $b_i(k''h_j^{-1})=1$. 
If we are in Case 2 then $b_i(k'h_j^{-1})=1$,
since otherwise, just as in the proof of Theorem~\ref{thm:wreathprod},
Condition (ii) of the definition of $E$
gives  $k'h_j^{-1} \in E$, and so
$k'=k''$, and we have a contradiction. 
Hence, in both Case 1 and Case 2 we again have
$\tau(b_i,kh_j^{-1})\tau(b_j,k) = \tau(b_i^{h_j}b_j,k)$.

Otherwise $b_i(k''h_j^{-1})\ne 1$, and then, again just as
in the proof of Theorem~\ref{thm:wreathprod}, 
Condition (iii) of the definition
of $E$ gives $k'' \in E$. Hence we are in Case 2 and $k'=k''$.
Then
\[\tau(b_i,gh_j^{-1})\tau(b_j,g) = \phi(b_i(k'h_j^{-1}))\phi(b_j(k'))\]
and
\[ \tau(b_i^{h_j}b_j,g) = \phi(b_i(k'h_j^{-1})b_j(k')).\] 
Since $b_i(k'h_j^{-1}), b_j(k') \in D$, our assumption that 
$\phi$ is a $(D,\epsilon/m)$-quasi-action implies that the
proportion of the points of $X$ on which the permutations
$\phi(b_i(k'h_j^{-1})b_j(k'))$ and $\phi(b_i(k'h_j^{-1}))\phi(b_j(k'))$
have the same image is at least $1-\epsilon/m$.

It follows that the proportion of elements $\delta \in Y$ with
$\delta^{\psi(h_ib_i)\psi(h_jb_j)}(kN) = \delta^{\psi(h_ih_jb_i^{h_j}b_j)}(kN)$
is at least $1-\epsilon/m$. But $\delta^{\psi(h_ib_i)\psi(h_jb_j)} =
\delta^{\psi(h_ih_jb_i^{h_j}b_j)}$ if and only if they take the same values
on all $kN \in H/N$, and the proportion of $\delta \in Y$ for which this is
true is at least $1-\epsilon$.
\end{proof}

\section{Extensions by amenable groups}
\label{sec:extam}
In Section \ref{sec:wp} we defined the restricted standard wreath product
$G \wr H$
 of groups
$G,H$.  In this section, we shall need wreath products by permutation groups.
For a group $K$ and a finite set $A$,
we define the permutation wreath product $W = K \wr \Sym(A)$ as
$W = \Sym(A) \ltimes B$ where the base group is now the set of all functions
$b:A \to K$.
As before we define $b_1b_2(a):= b_1(a)b_2(a)$ for $b_1,b_2 \in B, a \in A$,
and we define the action of $\Sym(A)$ on $B$ by the rule 
$b^\alpha(a) = b(a^{\alpha^{-1}})$, for $\alpha \in \Sym(A), a \in A$.
Much as before, elements of the wreath product are represented as pairs 
$(\alpha,b)$ with $\alpha \in \Sym(A), b \in B$, multiplied according to the rule
$(\alpha_1,b_1)(\alpha_2,b_2) = (\alpha_1\alpha_2,b_1^{\alpha_2}b_2)$,
and with $(\alpha,b)^{-1} = (\alpha^{-1},b^{-\alpha^{-1}})$.

In general the length function for finite wreath products that we used in the
proof of Theorem~\ref{thm:wreathprod} is not suitable for the proof of Theorem~\ref{thm:amenable} below. So we need to define a different one. 

Given an invariant length function $\ell_K$ on $K$, we can define an invariant
length function $\newell_K^A$ on $W$ by
\[  \newell_K^A((\alpha,b)) = \frac{1}{|A|} \left (
\sum_{a \in A: a^\alpha = a} \ell_K(b(a)) +
\sum_{a \in A: a^\alpha \neq a} 1 \right ) \]
Most of the conditions for $\newell_K^A$ to be an invariant length function are
straightforward consequences of the conditions on $\ell_K$.
The verification of
\[ \newell_K^A((\alpha_1 \alpha_2,b_1^{\alpha_2}b_2)) \leq 
\newell_K^A((\alpha_1,b_1)) + \newell_K^A((\alpha_2,b_2)) \] 
may require a little more thought. For this,
we consider the terms corresponding to the various $a \in A$ in the three sums
that make up
$\newell_K^A((\alpha_1 \alpha_2,b_1^{\alpha_2}b_2))$,
$\newell_K^A((\alpha_1,b_1))$, and $\newell_K^A((\alpha_2,b_2))$.
We see that,
for each $a \in A$ with $a^{\alpha_1} \ne a$ or $a^{\alpha_2} \ne a$,
the term in 
$\newell_K^A((\alpha_1 \alpha_2,b_1^{\alpha_2}b_2))$ is at most $1/|A|$,
but at least one of the two non-negative terms in
$\newell_K^A((\alpha_1,b_1))$ and $\newell_K^A((\alpha_2,b_2))$ is equal to
$1/|A|$.  On the other hand, for $a \in A$ with $a^{\alpha_1}=a$ and 
$a^{\alpha_2} = a$, the 
term corresponding to $a$ in 
$\newell_K^A((\alpha_1 \alpha_2,b_1^{\alpha_2}b_2))$ is
\[ \frac{1}{|A|}\ell_K(b_1^{\alpha_2}(a)b_2(a))=
       \frac{1}{|A|}\ell_K(b_1(a)b_2(a)) \leq 
\frac{1}{|A|}(\ell_K(b_1(a)) + \ell_K(b_2(a)), \]
which is the corresponding term in
$\newell_K^A((\alpha_1,b_1)) + \newell_K^A((\alpha_2,b_2))$.

\begin{theorem}
\label{thm:amenable}
Let $\C$ be a class of groups with associated invariant length functions
and suppose that, for all $(K,\ell_K) \in \C$ and all finite sets $A$,
the wreath product $(K \wr \Sym(A),\newell_K^A)$ is in $\C$.
Suppose that the group $G$ has a normal subgroup $N$  with the discrete
$\C$-approximation property (as defined in Section~\ref{sec:intro}) such that
$G/N$ is amenable. Then $G$ has the discrete $\C$-approximation property.
\end{theorem}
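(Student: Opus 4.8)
The plan is to follow the standard strategy for extensions by amenable groups from \cite{ElekSzabo}, embedding an approximation of $G$ into a permutation wreath product $K \wr \Sym(A)$, where $A$ is a F\o{}lner set for the amenable quotient $Q := G/N$ and where $(K,\ell_K) \in \C$ together with a quasi-homomorphism $\phi_N : N \to K$ come from the discrete $\C$-approximation property of $N$. Writing $\pi : G \to Q$ for the quotient map, I would fix a set-theoretic section $s : Q \to G$ with $s(1)=1$ and form the cocycle $\rho : G \times Q \to N$, $\rho(g,q) := s(\pi(g)q)^{-1} g\, s(q)$; one checks that $\rho(g,q) \in N$ and that it satisfies the crossed-homomorphism identity $\rho(gh,q) = \rho(g,\pi(h)q)\,\rho(h,q)$. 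For each $g$, the left-multiplication map $a \mapsto \pi(g)^{-1}a$ is a partial bijection of $A$, which I would extend arbitrarily (say by the identity on leftover points) to a permutation $\alpha_g \in \Sym(A)$; setting $b_g(a) := \phi_N(\rho(g,a))$, the candidate map is $\Psi(g) := (\alpha_g,b_g) \in K \wr \Sym(A)$, a group which, equipped with $\newell_K^A$, lies in $\C$ by the hypothesis of the theorem. The weight function for $G$ is the constant $\delta_g = c$ (for $g \ne 1$), where $c$ witnesses the discrete property of $N$; as it is constant, success yields the discrete $\C$-approximation property for $G$.

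It then remains to verify the three conditions of Definition~\ref{def:defs}(2) for $\Psi$, measured by $\newell_K^A$. The condition $\Psi(1)=1$ is immediate from $s(1)=1$ and $\phi_N(1)=1$. For the lower bound, an element $g \notin N$ acts on $Q$ with no fixed points, so $\alpha_g$ moves all but a F\o{}lner-small proportion of $A$, giving $\newell_K^A(\Psi(g))$ close to $1$ and hence at least $c$; an element $1 \ne g \in N$ has $\alpha_g = \mathrm{id}$ and $b_g(a) = \phi_N(g^{s(a)})$ with $g^{s(a)} \ne 1$, so the lower-bound clause for $\phi_N$ forces $\ell_K(b_g(a)) \ge c$ for every $a$ and thus $\newell_K^A(\Psi(g)) \ge c$.

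The heart of the argument is the multiplicativity estimate. I would compare $\Psi(gh)$ with $\Psi(g)\Psi(h) = (\alpha_g\alpha_h,\, b_g^{\alpha_h}b_h)$ coordinate by coordinate over $A$. On the F\o{}lner-large ``good'' set of points at which $\alpha_g$, $\alpha_h$ and $\alpha_{gh}$ all act by the genuine multiplication rule, the permutation parts coincide, and the base values reduce, via the cocycle identity, to a comparison of $\phi_N(\rho(gh,a))$ with $\phi_N(\rho(g,\pi(h)a))\,\phi_N(\rho(h,a))$, which differ by at most the defect of $\phi_N$. Thus $\newell_K^A$ of the discrepancy is bounded by a F\o{}lner error (from moved or boundary points) plus the quasi-homomorphism defect $\epsilon'$ of $\phi_N$ on the fixed points. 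The quantifiers must be ordered carefully: given $\epsilon$ and finite $F$, I would first choose $A$ to be F\o{}lner with respect to $\{\pi(g) : g \in F\}$ and the relevant products so that the boundary proportion is a small fraction of $\epsilon$, then assemble the finite set $D \subseteq N$ of all values $\rho(g,a)$, $\rho(g,\pi(h)a)$, $\rho(gh,a)$ and conjugates $g^{s(a)}$ for $g,h \in F$, $a \in A$, and only then invoke the discrete $\C$-approximation of $N$ to produce $(K,\ell_K) \in \C$ and a $(D,\epsilon',\delta^N,\ell_K)$-quasi-homomorphism $\phi_N$ with $\epsilon'$ small.

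I expect the main obstacle to lie in the permutation part rather than in the base part. Replacing the partial bijections $a \mapsto \pi(g)^{-1}a$ by honest permutations $\alpha_g$ breaks the relation $\alpha_{gh} = \alpha_g\alpha_h$ on a boundary set, and the whole estimate depends on controlling, uniformly for all of the finitely many pairs $g,h \in F$ at once, both this boundary set and the set of points where an individual $\alpha_g$ deviates from the ideal action; the F\o{}lner condition must be invoked for a single set $A$ that works for every such pair simultaneously. Ensuring that these permutation errors combine additively with --- rather than amplifying --- the approximation error from $\phi_N$ is the delicate bookkeeping step. By contrast, the base-group estimate is essentially formal, being a direct consequence of the exact cocycle identity and the quasi-homomorphism property of $\phi_N$.
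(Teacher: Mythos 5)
Your proposal is correct and follows essentially the same route as the paper's proof: a F\o{}lner set $A$ for $G/N$, a section-induced cocycle into $N$, the map into $K \wr \Sym(A)$ with permutation part given by (partial) translation on $A$ and base part given by a quasi-homomorphism of $N$ applied to the cocycle values, with the same quantifier order (choose $A$ first, then the finite subset of $N$, then the quasi-homomorphism) and the same additive combination of the F\o{}lner error with the defect of $\phi_N$ under $\newell_K^A$. The only differences are cosmetic (left versus right translation, explicit cocycle notation, and the paper's separate handling of the degenerate case $N=\{1\}$).
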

This result is already proved for sofic groups \cite[Theorem 1 (3)]{ElekSzabo}
and linear sofic groups \cite[Theorem 5.3]{Stolz}.  However, in order to avoid
confusion we should comment that, while the above result considers extensions
$G$ of $\C$-approximable normal subgroups $N$ with $G/N$  amenable, by
contrast, \cite[Theorem 7]{ArzhantsevaGal} considers
extensions $G$ of finitely generated residually finite normal subgroups $N$
for which $G/N$ is in a selected class ${\cal R}$ of groups (including groups
that are residually amenable groups, LEF, LEA, sofic or surjunctive) .

\begin{proof}
The proof is based on the corresponding proof in
\cite[Theorem 1 (3)]{ElekSzabo} for sofic groups $N$.

By assumption, the normal subgroup $N$ of $G$ is $\C$-approximable using 
a weight function $\delta$ that takes a constant value $c$ on all elements
of  $N \setminus \{1\}$. Since we can reduce the value of $c$ without affecting
the $\C$-approximability of $N$, we may assume that $c < 1$.
If $N \neq \{1\}$  then we define the weight function $\beta$ of $G$
by $\beta_g =c$ for all $g \neq 1$, and
if $N=\{1\}$, then we define $\beta$ by  $\beta_g=\frac{1}{2}$ for all $g \neq 1$. 

For  $g \in G$, let $\bar{g}$ be the homomorphic image of $g$ in $G/N$ and let
$\sigma:G/N \to G$ be a section (so $\overline{\sigma(h)} = h$
for all $h \in G/N$), where $\sigma(\bar{1}) = 1$.
We can lift $\sigma$ to a map from $G$ to $G$ for which the image of
$g \in G$ is $\sigma(\overline{g})$; we shall abuse notation and call that map
$\sigma$ as well.

To verify the $\C$-approximability condition on $G$,
let $F$ be a finite subset of $G$ and let $\epsilon > 0$.
We may assume that $\epsilon < \min(1/2,1-c)$.

The amenability of $G/N$ ensures the existence of a finite subset $\overline{A}$
of $G/N$ containing the identity element such that
$|\overline{A}\bar{g} \setminus \overline{A}| \le \epsilon|\overline{A}|$ for
all $g \in F \cup F^{-1} \cup F^2 \cup F^{-2}$. Let $A = \sigma(\overline{A})$;
note that all points of $A$ are fixed by the map $\sigma:G \rightarrow G$.
We define a map $\phi:G \to \Sym(A)$ as follows:
\[ {\rm for}\quad g \in G, a \in A, 
a^{\phi(g)} := \left \{ \begin{array}{ll} \sigma(ag), & \hbox{\rm if}\,\,
\overline{ag} \in \overline{A}\\
\hbox{\rm any choice with}\,\phi(g) \in \Sym(A),&\hbox{\rm otherwise.}\end{array}\right. \]

Let $E = N \cap (A \cdot F \cdot A^{-1})$.  
The $\C$-approximability of $N$ ensures the existence of an 
$(E,\epsilon,\delta,\ell_K)$-quasi-homomorphism $\psi: N \to K$ with $(K,\ell_K) \in \C$.

Now we let $W = K \wr \Sym(A) = \Sym(A) \ltimes B$ and define
$\Phi: G \to W$ by $\Phi(g) = (\phi(g),b)$ where, for $a \in A$,
$b(a) = \psi(\sigma(ag^{-1})ga^{-1})$.

We show first that $\newell_K^A(\Phi(g)) \ge \beta_g$ for $g \in F$.
If $g \not\in N$ then, since $\phi(g)$ moves all points $a \in A$ for which
$\overline{ag} \in \overline{A}$, we have
$\newell_K^A(\Phi(g)) \ge 1-\epsilon > 1/2 = \delta_g$.
If $g \in N \setminus \{1\}$ then $\overline{ag^{-1}}=\overline{a}$, so $\sigma(ag^{-1}) = a$ for all
$a \in A$, and $\newell_K^A(\Phi(g))$ is the average over $a \in A$
of $\ell_K(\psi(aga^{-1}))$. But since each $aga^{-1}  \in E \setminus \{1\}$,
these all exceed $\delta_g$.

Now let $g,h \in F$.
We aim to show that
           $\newell_K^A(\Phi(gh)\Phi(h)^{-1}\Phi(g)^{-1}) \leq 5\epsilon$.

For $a \in A$, we have
\[
\begin{array}{lllcl}
\Phi(g) &= (\phi(g),b),&\mbox{where}\quad b(a)&=&\psi(\sigma(ag^{-1})ga^{-1})\\
\Phi(h) &= (\phi(h),c),&\mbox{where}\quad c(a)&=&\psi(\sigma(ah^{-1})ha^{-1})\\
\Phi(gh) &= (\phi(gh),d),&\mbox{where}\quad d(a)&=&
                             \psi(\sigma(ah^{-1}g^{-1})gha^{-1})\\
\Phi(g)\Phi(h) &= (\phi(g)\phi(h),b^{\phi(h)}c),\\
&\mbox{where }
(b^{\phi(h)}c)(a) &= b^{\phi(h)}(a)c(a)&=& b(a^{\phi(h)^{-1}})c(a)\\
&& \multicolumn{3}{l}{=\psi(\sigma(a^{\phi(h)^{-1}}g^{-1})ga^{-\phi(h)^{-1}})
  \,\times \psi(\sigma(ah^{-1})ha^{-1}),}
\end{array}
\]
(where, for $a,k \in G$, we write $a^{-k}$ as shorthand for $(a^{-1})^k=(a^k)^{-1}$).
Then
\begin{eqnarray*}
\Phi(gh)(\Phi(g)\Phi(h))^{-1} &=& (\phi(gh),d)(\phi(g)\phi(h),b^\phi(h)c)^{-1}\\
&=& (\phi(gh),d)((\phi(g)\phi(h))^{-1},(b^\phi(h)c)^{-(\phi(g)\phi(h))^{-1}})\\
&=& (\phi(gh)(\phi(g)\phi(h))^{-1},(d(b^\phi(h)c)^{-1})^{(\phi(g)\phi(h))^{-1}}).
\end{eqnarray*}

Now, for a proportion of at least $1 - 2\epsilon$ of the points $a \in A$,
we have both $\overline{ah^{-1}}\in \overline{A}$ and
$\overline{ah^{-1}g^{-1}}\in \overline{A}$.
For those points $a$, we have
$a^{\phi(h)^{-1}} = \sigma(ah^{-1})$ 
and so 
the final expression for 
$(b^{\phi(h)}c)(a)$ above becomes 
\[ \psi(\sigma(ah^{-1}g^{-1})g \sigma(ah^{-1})^{-1})
         \times \psi(\sigma(ah^{-1})ha^{-1}), \]
and we see that the image of $a$ under the second component of
$\Phi(gh)(\Phi(g)\Phi(h))^{-1}$ is equal to a conjugate of 
\[ \psi(xy)\psi(y)^{-1}\psi(x)^{-1},\]
where $x=\sigma(ah^{-1}g^{-1})g \sigma(ah^{-1})^{-1}$
and $y = \sigma(ah^{-1})ha^{-1}$. The elements $x,y$ are both in the finite subset $E$ of $G$, and hence,
since $\psi$ is a quasi-homomorphism,
$\ell_K(\psi(xy)\psi(y)^{-1}\psi(x)^{-1}) < \epsilon$, and
we deduce that 
\[ \ell_K((d(b^{\phi(h)}c)^{-1})^{(\phi(g)\phi(h))^{-1}})(a)) < \epsilon, \]
for at least a proportion $1-2\epsilon$ of the points of $A$.

Our choice of $A$ ensures also that  
$\phi(gh)(\phi(g)\phi(h))^{-1}(a)=a$ for at
least a proportion $1-2\epsilon$ of the points $a$ of $A$.

Now, for at least a proportion $1-4\epsilon$ of the points of $A$,
the conditions of both of the last two paragraphs hold, and so
we can deduce
\[ \newell_K^A(\Phi(gh)\Phi(h)^{-1}\Phi(g)^{-1}) < \epsilon(1-4\epsilon) + 4\epsilon < 5 \epsilon.\]
\end{proof}

In particular, by taking $\C= \F$ with each $K \in \F$ associated with all possible length functions,
we see that that the class of weakly sofic groups is closed under extension by amenable groups.

In general, $\ell_K$ commutator-contractive does not imply that $\newell_K^A$
is commutator-contractive. 
But if, instead, we define $\ell_K^A$ as we did in
Section \ref{sec:wp} (that is,
for $b \in B$, $\ell_K^A(b) = \max_{a \in A} \ell_K(b(a))$, and
$\ell_K^A(\alpha b) = 1$ when $1 \ne \alpha \in \Sym(A)$) then, as we proved
in Lemma \ref{lem:wpcc}, $\ell_K^A$ is commutator-contractive.

Our proof of Theorem \ref{thm:amenable} does not always work with
this commutator-contractive norm, but it does work if $\phi:G/N \to A$ is a
homomorphism. In particular, when $G/N \cong (\Z,+)$, we can choose
$A$ to be $\{ x\in \Z : -m \le x \le m \}$ for some $m$ and define $\phi$
to be addition modulo $2m+1$. So, by applying this repeatedly, we have

\begin{proposition} The class of $\F_c$-approximable groups
is closed under extension by polycyclic groups.
\end{proposition}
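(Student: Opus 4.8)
The plan is to reduce the statement to a chain of single cyclic extensions and to handle each one by re-running the proof of Theorem~\ref{thm:amenable} with the commutator-contractive length function $\ell_K^A$ in place of the averaging norm $\newell_K^A$. A preliminary point makes the bookkeeping clean: as observed in Section~\ref{sec:intro}, the class of $\F_C$-approximable groups has the strong discrete $\C$-approximation property, so a group is $\F_C$-approximable if and only if it has the discrete $\C$-approximation property. I would therefore work throughout with the discrete property, which is exactly what Theorem~\ref{thm:amenable} both consumes and produces. Note that Theorem~\ref{thm:amenable} cannot be invoked as a black box here, since its hypothesis that $(K \wr \Sym(A),\newell_K^A) \in \C$ fails for $\C = \F_C$: the norm $\newell_K^A$ need not be commutator-contractive. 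It is its \emph{proof} that I want to adapt.

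Given $N \trianglelefteq G$ with $N$ an $\F_C$-approximable group and $Q := G/N$ polycyclic, choose a subnormal series $1 = Q_0 \triangleleft Q_1 \triangleleft \cdots \triangleleft Q_n = Q$ with each factor $Q_{i+1}/Q_i$ cyclic (finite or infinite), and let $G_i$ be the preimage of $Q_i$ in $G$. Then $N = G_0 \triangleleft G_1 \triangleleft \cdots \triangleleft G_n = G$, with $G_i \trianglelefteq G_{i+1}$ and $G_{i+1}/G_i \cong Q_{i+1}/Q_i$ cyclic. I would induct on $n$, the inductive step being the assertion that a cyclic extension $G_{i+1}$ of a group $G_i$ with the discrete $\C$-approximation property again has that property. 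Since each cyclic factor is amenable, this is a special case of the situation of Theorem~\ref{thm:amenable}, and iterating it up the series delivers the discrete $\F_C$-approximation property for $G = G_n$.

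For a single cyclic step I follow the construction in the proof of Theorem~\ref{thm:amenable}, with quotient $G_{i+1}/G_i$, replacing $\newell_K^A$ by $\ell_K^A$, which is commutator-contractive by Lemma~\ref{lem:wpcc}, so that the approximating wreath products $K \wr \Sym(A)$ remain in $\F_C$. The decisive modification is to take the action $\phi$ of the quotient on $A$ to be a genuine homomorphism rather than the approximate action supplied by amenability. When $G_{i+1}/G_i$ is finite cyclic of order $d$ I take $A$ to be a regular $(G_{i+1}/G_i)$-set, with $\phi$ the regular representation; when $G_{i+1}/G_i \cong \Z$ I take $A = \{x \in \Z : -m \le x \le m\}$ with $\phi$ addition modulo $2m+1$, exactly as in the remark preceding the proposition. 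The payoff is that, because $\phi$ is now a homomorphism, the permutation component $\phi(g_1g_2)\phi(g_2)^{-1}\phi(g_1)^{-1}$ of the quasi-homomorphism error $\Phi(g_1g_2)\Phi(g_2)^{-1}\Phi(g_1)^{-1}$ is \emph{exactly} trivial, so the whole error lies in the base group, where $\ell_K^A$ is just the maximum over $a \in A$ of $\ell_K$ of the coordinates. On the interior coordinates the computation of Theorem~\ref{thm:amenable} identifies each coordinate as a conjugate of $\psi(xy)\psi(y)^{-1}\psi(x)^{-1}$ with $x,y$ in the finite set $E$, hence of length at most $\epsilon$ by the quasi-homomorphism property of $\psi$.

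The main obstacle, and the step I expect to require the most care, is the passage from the averaging norm to the maximum norm at the Følner boundary. In the finite-cyclic case there is in fact no obstacle: taking $\overline{A}$ to be the whole finite quotient gives $\overline{A}\,\overline{g} = \overline{A}$ exactly, so there are no boundary coordinates, every coordinate of the base error is a genuine quasi-homomorphism defect, and $\ell_K^A \le \epsilon$ follows at once. In the $\Z$-case, however, the translation modulo $2m+1$ realises the (possibly infinite-order) conjugation automorphism of $G_{i+1}$ on $G_i$ only up to a seam near the wrap-around, consisting of a bounded number of coordinates (at most the largest $\Z$-exponent occurring among elements of $F$), on which the base coordinate is not of the clean commutator form above. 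With $\newell_K^A$ these form a vanishing proportion and are absorbed into the $5\epsilon$ estimate of Theorem~\ref{thm:amenable}, but with the maximum norm a single uncontrolled coordinate would be fatal. Disposing of this seam in the max-norm is the crux: the homomorphism property of $\phi$ has already removed the permutation-part error (this is the precise sense in which the proof ``works when $\phi$ is a homomorphism''), and it remains to neutralise the bounded boundary strip in the base. Here I would exploit the freedom to choose $A$ and to let $m$ grow relative to $F$ and $\epsilon$, together with the special structure of commutator-contractive length functions, so as to reduce the general polycyclic case to finitely many such cyclic steps; pinning down the boundary estimate is the one place where the argument is genuinely delicate rather than a routine transcription of Theorem~\ref{thm:amenable}.
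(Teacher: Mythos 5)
Your route is the same as the paper's: pass to a subnormal series of $G/N$ with cyclic factors, reduce to a single cyclic extension, and re-run the proof of Theorem~\ref{thm:amenable} with the commutator-contractive length function $\ell_K^A$ of Section~\ref{sec:wp} (kept inside $\F_C$ by Lemma~\ref{lem:wpcc}) in place of $\newell_K^A$, arranging for $\phi$ to be an honest homomorphism so that the permutation part of the error $\Phi(gh)\Phi(h)^{-1}\Phi(g)^{-1}$ vanishes and only the base group matters. Your preliminary reduction to the discrete approximation property is fine, and your treatment of the finite cyclic factors (take $\overline{A}$ to be the whole quotient, so there is no F{\o}lner boundary and every base coordinate is a genuine defect $\psi(xy)\psi(y)^{-1}\psi(x)^{-1}$ with $x,y\in E$) is complete and, if anything, more explicit than the paper, which disposes of the entire proposition with the remark that the proof of Theorem~\ref{thm:amenable} ``does work if $\phi$ is a homomorphism'' together with the choice of $A=\{x\in\Z:-m\le x\le m\}$ and $\phi$ equal to addition modulo $2m+1$.

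The problem is that your proof stops exactly where it becomes nontrivial, and the step you defer is the whole content of the infinite cyclic case. At a wrap-around coordinate $a$, where $a^{\phi(h)^{-1}}\ne\sigma(ah^{-1})$, the base entry of the error is $\psi(xy)\psi(y)^{-1}\psi(x'')^{-1}$, where $x''$ is the conjugate of $x$ by $t^{\pm(2m+1)}$ ($t$ being a lift of the generator of the $\Z$-factor); this is not a quasi-homomorphism defect of $\psi$, and $\ell_K$ of it has no reason to be small, since $\psi(x)$ and $\psi(x'')$ are unrelated values of $\psi$ at two distinct elements of $N$. Because $\ell_K^A$ on the base group is a maximum, a single such coordinate already prevents the error from being $O(\epsilon)$; and such coordinates exist for every $m$ whenever some element of $F$ has nonzero image in $\Z$, so your proposed remedies (letting $m$ grow, invoking commutator-contractivity) do not address it --- enlarging $m$ only relocates the seam, and the bad entry is not a commutator. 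So as written your argument establishes closure under extensions by finite cyclic (hence by finite) groups but not by $\Z$, and the proposition is not proved. In fairness, the paper's own justification consists of the bare assertion quoted above and does not confront this seam either; you have reproduced the published argument up to and including its weakest point, and a complete proof would require either a new idea at the wrap-around or a different construction for infinite cyclic factors.
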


\textsc{D. F. Holt,
Mathematics Institute,
University of Warwick,
Coventry CV4 7AL,
UK
}

\emph{E-mail address}{:\;\;}\texttt{D.F.Holt@warwick.ac.uk}

\textsc{Sarah Rees,
School of Mathematics and Statistics,
University of Newcastle,
Newcastle NE3 1ED,
UK
}

\emph{E-mail address}{:\;\;}\texttt{Sarah.Rees@newcastle.ac.uk}

\end{document}